\documentclass[11pt,twoside]{amsart}

\usepackage[english]{babel}

\usepackage{amssymb, newpxtext, xcolor}
\usepackage[all,cmtip]{xy}
\usepackage[colorinlistoftodos]{todonotes}
\usepackage{graphicx}
\usepackage[colorlinks=true, allcolors=blue]{hyperref}

\newtheorem{dfn}{Definition}[section]
\newtheorem{thm}[dfn]{Theorem}
\newtheorem{crl}[dfn]{Corollary}
\newtheorem{rem}[dfn]{Remark}
\newtheorem{example}[dfn]{Example}

\newenvironment{preuve}{{\em \bf Proof:}}{\hfill $\blacksquare$}

\def\P{\mathbb P}

\def\Q{\mathbb Q}
\def\R{\mathbb R}
\def\Z{\mathbb Z}
\def\H{\mathbb H}

\def\C{\mathbb C}
\def\T{\mathbb T}

\definecolor{green}{rgb}{0,.6,0}

\title[Nonrationality Degree]{Nonrationality Degree of Toric Quasifolds}
\author[Fiammetta Battaglia and Elisa Prato]{Fiammetta Battaglia and Elisa Prato}

\usepackage{amsmath}
\begin{document}
\begin{abstract} 
Toric quasifolds are nonrational generalizations of toric manifolds and orbifolds. We introduce the notion of \emph{nonrationality degree}, an invariant that measures the extent to which a toric quasifold fails to be Hausdorff. We do so by first defining analogous notions for the underlying quasilattice and the corresponding quasitorus. We conclude by applying the nonrationality degree to reframe Gordan's lemma in the nonrational setting.
\end{abstract}
\maketitle
\begin{small}
\noindent \textbf{Keywords.} Toric quasifold, quasilattice, quasitorus, nonrational fan, nonrational polytope.
\smallskip

\noindent \textbf{Mathematics~Subject~Classification:}
Primary: 14M25. Secondary: 52B20.
\end{small}

\section*{Introduction}\label{introduzione}
Toric quasifolds are a natural generalization of toric manifolds and orbifolds in the nonrational case. They were first introduced 
by the second author, in the symplectic setting, in \cite{p}. Their complex and algebraic--geometric counterparts were introduced by both authors in \cite{cx} and \cite{atq}, respectively.
These spaces are locally modeled by $\C^n\cong\R^{2n}$ modulo the action of countable groups
and are generally not Hausdorff.

The aim of this article is
to introduce the notion of \textit{nonrationality degree} of toric quasifolds, 
which measures the extent to which they fail to be Hausdorff.

We recall that 
the starting data for a toric quasifold is provided by the 
\textit{fundamental triple}
$$(\Sigma, Q, \{v_1,\ldots,v_d\}),$$
where $\Sigma$ is a complete simplicial fan in $\R^n$, $Q$ is a quasilattice, namely the $\Z$--span of a set of $\R$--spanning vectors in $\R^n$, and the vectors $v_1,\ldots,v_d$ are in $Q$ and generate the rays of $\Sigma$. The corresponding toric quasifold, which we denote by $X_\Sigma$, is acted upon by the real quasitorus $\R^n/Q$ in the symplectic setting, and by the complex quasitorus $\C^n/Q$ in the complex and algebraic--geometric settings.

In order to define the nonrationality degree of $X_\Sigma$, we first investigate the structure of the underlying quasilattice $Q$. 

As a consequence of the structure theorem for closed groups of $\R^n$, we prove that $Q$ admits a canonical decomposition
$$
Q=(Q\cap V)\oplus(Q\cap W),
$$
where $V$ and $W$ are complementary subspaces of $\R^n$,
$Q\cap V$ is dense in $V$ and $Q\cap W$ is a lattice in $W$ (see Theorem~\ref{quasi decomposition}).
The subspace $V$ is unique and maximal with this property. We define the 
nonrationality degree of $Q$ to be the dimension of $V$. This integer will ultimately define the nonrationality degree of the real and complex quasitori and of the toric quasifold.

In fact, the decomposition of $Q$ naturally yields structure theorems for the corresponding quasitori. For example, in the complex case, we get
$$\C^n/Q\cong D_{\C}^h\times (\C^*)^{n-h},$$ where $h$ is the nonrationality degree of $Q$ and 
$D_{\C}^h$ is a totally nonrational complex quasitorus of dimension $h$ (see Theorem~\ref{complexquasitorusdecomp}). Accordingly, we define
$h$ to be the nonrationality degree of $\C^n/Q$. The same applies to $\R^n/Q$.
Observe that $Q$ is a lattice and both quasitori are tori if and only if their nonrationality degree $h$ is zero. Increasing values of $h$ correspond to the presence of higher--dimensional totally nonrational directions.

As in the rational case, the action of the complex quasitorus has a dense open orbit. This naturally leads to defining the notion of nonrationality degree of the toric quasifold to coincide with that of its residing quasitorus.

A given complete simplicial fan may be viewed in the context of infinitely many different triples, and
the degree of the corresponding quasifolds will vary accordingly. We take the minimum possible nonrationality degree of these quasifolds to define the intrinsic nonrationality degree
of the fan. The fan is rational if and only if its nonrationality degree is zero. This applies also to simple polytopes, by considering their normal fan. This invariant is purely convex--geometric. It has the interesting feature of providing a lower bound for the nonrationality degree of the corresponding toric quasifolds.

The motivation for introducing these notions of nonrationality degree is twofold. On the one hand, it comes from the framework of LVMB manifolds, a special class of complex foliated manifolds \cite{ldm,M,bosio}, and their measures of nonrationality (see \cite[Section~2.2.3]{bz1}). The relation between these measures and the nonrationality degree of quasifolds is currently being investigated by F.~Thiella. On the other hand, the motivation also stems from the study of nonrational toric geometry in the algebraic--geometric setting (see \cite{atq}). 
In this framework, replacing a lattice with a quasilattice has significant repercussions, such as  the different behavior of duals (see Remarks~\ref{hom} and \ref{ring}) and the failure of Gordan's Lemma, a fundamental result in classical toric geometry. In fact, we recast the lemma by proving that it holds if and only if the nonrationality degree of the quasilattice is zero (see Theorem~\ref{gordanlemma}).
In other words, any positive nonrationality degree precludes the classical finite generation property.

We conclude by remarking that quasilattices are key objects in several different areas of mathematics. For example, they form the basis for the study of nonperiodic tilings that are related to quasicrystals (see, for example, \cite{senechal, kite}). They also appear, explicitly or implicitly, in most other works addressing nonrational toric geometry (see \cite{bz1, ratiu, HS, H, KLMV, IKP, boivin}). Moreover, they arise naturally in the framework of diffeology (see, for example, \cite{di, IZP}). 

We expect these notions of nonrationality degree to provide a conceptual basis for future developments in nonrational toric and convex geometry, as well as in other mathematical settings in which quasilattices arise naturally.

The paper is organized as follows. In Section~\ref{quasilattice}, we investigate the structure of quasilattices and define their nonrationality degree. In Section~\ref{quasitorus}, we establish the structure theorems for quasitori and introduce their nonrationality degree. In Section~\ref{quasifold}, we extend this notion to toric quasifolds. In Section~\ref{fan}, we discuss the case of fans and polytopes. Finally, in Section~\ref{gordan}, we reframe Gordan's lemma in the nonrational setting.

\section{Nonrationality degree of a quasilattice}\label{quasilattice}

We begin by recalling the following
\begin{dfn}\rm{A \textit{quasilattice} $Q$ in $\R^n$ is the $\Z$--span of a set of $\R$--spanning vectors of $\R^n$.}
\end{dfn}
A quasilattice is, in particular, a free $\Z$--module of finite rank and a subgroup of $\R^n$. The 
\textit{rank} of $Q$ is the number of vectors in a set of minimal generators.
We recall that a lattice $L$ in $\R^n$ is the $\Z$--span of a basis of $\R^n$. It is therefore a quasilattice of rank equal to $n$. On the other hand, the classical definition of lattice \cite[Chapter~7, Section~2.2]{serre} is that of a subgroup $L$ of $\R^n$ that satisfies the following equivalent conditions:
\begin{itemize}
\item $L$ is discrete and $\R^n/L$ is compact;
\item $L$ is discrete and spans $\R^n$;
\item there exists an $\R$--basis of $\R^n$ which is a $\Z$--basis of $L$.
\end{itemize}
Therefore, a quasilattice $Q$ in $\R^n$ is a lattice if and only if it is a discrete subset of $\R^n$. 

The aim of this section is to introduce the notion of \textit{nonrationality degree} of a quasilattice. 
We will be making use of the classification theorem of closed subgroups of $\R^n$, which we recall from Bourbaki \cite[Chapter VII, Section 1.2, Thm. 2]{bourbaki}:
\begin{thm}\label{closed}
Let $G$ be a closed subgroup of $\R^n$, and let $k=\dim(\operatorname{span}_{\R}(G))$. Then there exists a maximal vector subspace $V\subset \R^n$, $\dim(V)\leq k$, such that, for every complement $W$ of $V$, $\Lambda=W\cap G$ is a discrete subgroup of rank $r=k-\dim V$ and $G=V\oplus\Lambda$.
\end{thm}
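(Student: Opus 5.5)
We prove the structure theorem for closed subgroups in the standard way, by induction on the dimension $k$ of $E=\operatorname{span}_{\R}(G)$. Replacing $\R^n$ by $E$, we may assume $G$ spans $\R^n$, so $k=n$.

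\textbf{The subspace $V$.} Let $V$ be the union of all vector subspaces contained in $G$. Since $G$ is a group, the sum of two subspaces contained in $G$ is again contained in $G$. Hence $V$ is itself a subspace, it is the largest subspace contained in $G$, and it is automatically maximal. It satisfies $\dim V\le k$.

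\textbf{Reduction to $V=0$.} Choose any complement $W$ of $V$ and let $p\colon\R^n\to W$ be the projection along $V$. Since $V\subset G$, we have $G=V\oplus (G\cap W)$: every $g\in G$ can be written $g=v+(g-v)$ with $v\in V\subset G$, so $g-v\in G\cap W$.

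The set $\Lambda=G\cap W$ is closed in $W$. Moreover, $\Lambda$ contains no line. If it contained a line $\R w$, then $V+\R w$ would be a subspace contained in $G$ and strictly larger than $V$, contradicting maximality. So everything reduces to one claim: a closed subgroup $\Lambda$ of a real vector space that contains no line is discrete.

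\textbf{The main step: discreteness.} This is where the real work lies. Suppose $\Lambda$ is not discrete. Then there are nonzero $x_m\in\Lambda$ with $x_m\to0$. Normalize: after passing to a subsequence, $x_m/|x_m|\to u$ with $|u|=1$.

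Now fix $t\in\R$ and set $n_m=\lfloor t/|x_m|\rfloor$. Then $n_mx_m\in\Lambda$, and
\[
|n_mx_m-tu|\le \bigl|t\,x_m/|x_m|-tu\bigr|+|x_m|\to0 .
\]
Since $\Lambda$ is closed, $tu\in\Lambda$. As $t$ was arbitrary, the whole line $\R u$ lies in $\Lambda$, a contradiction. Hence $\Lambda$ is discrete.

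\textbf{The rank of $\Lambda$.} A discrete subgroup of $W$ is generated by $r$ linearly independent vectors, with $r\le\dim W$. This is the standard fact, proved by induction: pick a shortest vector, project along it, and check that the image is again discrete. So $\Lambda$ is free of rank $r$ equal to $\dim\operatorname{span}_{\R}\Lambda$.

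Finally, $\operatorname{span}_{\R}G=V\oplus\operatorname{span}_{\R}\Lambda$ has dimension $k$. Therefore $r=k-\dim V$, as claimed.

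\textbf{Independence of the complement.} The subspace $V$ was defined intrinsically, without reference to $W$. The argument above works for every complement $W$, so the conclusion $G=V\oplus(W\cap G)$ holds for every complement, as stated.
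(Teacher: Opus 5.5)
Your proof is correct. The paper gives no proof of this statement; it quotes it from Bourbaki. Your argument is essentially the standard one found there: take the largest subspace $V\subset G$, split $G=V\oplus(G\cap W)$, show by the rescaling argument that a closed subgroup containing no line is discrete, and finish with the structure theorem for discrete subgroups.

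One cosmetic point: drop the initial reduction to $k=n$. Taken literally, it would only treat complements of $V$ inside $E=\operatorname{span}_{\R}(G)$, whereas the statement quantifies over complements in $\R^n$. Your argument never actually uses $k=n$, so as written it already covers every complement in $\R^n$.
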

Given a quasilattice $Q$ in $\R^n$, we apply this classification theorem to the closed subgroup $\overline{Q}$ and obtain the following
\begin{thm}\label{quasi decomposition}
Let $Q$ be a quasilattice of $\R^n$. Then there exists a unique maximal vector subspace
$V\subset \R^n$ and a complement $W$ of $V$ such that
$$
Q=(Q\cap V)\oplus(Q\cap W),
$$
where $Q\cap V$ is dense in $V$ and $Q\cap W$ is a lattice in $W$.
\end{thm}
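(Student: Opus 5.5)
We apply Theorem~\ref{closed} to the closed subgroup $G=\overline{Q}$. Since $Q$ contains an $\R$--spanning set, $k=\dim\operatorname{span}_\R(G)=n$. We get a maximal subspace $V$ with $\overline{Q}=V\oplus\Lambda$, where $\Lambda$ is discrete of rank $r=n-\dim V$. The subtlety is that an \emph{arbitrary} complement $W$ of $V$ need not work: $Q\cap W$ could be too small, or even trivial. So the plan is to build $W$ out of elements of $Q$ itself.

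Let $\pi\colon\R^n\to\R^n/V$ be the projection. Then $\pi(\overline{Q})=\pi(\Lambda)$. This is a lattice in $\R^n/V\cong\R^r$, since $\Lambda\cap V=0$ and $\Lambda$ is discrete of full rank $r$ in a complement. Because $\pi$ is continuous and $Q$ is dense in $\overline{Q}$, the subgroup $\pi(Q)$ is dense in the discrete set $\pi(\Lambda)$. Hence $\pi(Q)=\pi(\Lambda)$ is a free abelian group of rank $r$. Choose $q_1,\ldots,q_r\in Q$ such that $\pi(q_1),\ldots,\pi(q_r)$ is a $\Z$--basis of $\pi(\Lambda)$, and set $W=\operatorname{span}_\R(q_1,\ldots,q_r)$. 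The images $\pi(q_i)$ are $\R$--independent, so $W$ is a complement of $V$.

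We then verify the decomposition.
\begin{itemize}
\item Given $q\in Q$, write $\pi(q)=\sum m_i\pi(q_i)$ with $m_i\in\Z$. Then $q-\sum m_iq_i\in Q\cap\ker\pi=Q\cap V$. This gives $Q=(Q\cap V)+\bigoplus_i\Z q_i$.
\item If $w=\sum a_iq_i\in Q\cap W$, then $\pi(w)\in\pi(\Lambda)$. Since $\{\pi(q_i)\}$ is a $\Z$--basis, all $a_i\in\Z$. Hence $Q\cap W=\bigoplus\Z q_i$, which is a lattice in $W$.
\item The sum is direct because $V\cap W=0$.
\end{itemize}

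For density of $Q\cap V$ in $V$, take $v\in V\subset\overline{Q}$ and $q_m\in Q$ with $q_m\to v$. Write $q_m=a_m+\ell_m$ with $a_m\in Q\cap V$ and $\ell_m\in Q\cap W$. Then $\pi(\ell_m)=\pi(q_m)\to\pi(v)=0$ in the discrete group $\pi(\Lambda)$. So $\ell_m=0$ for large $m$, and therefore $a_m\to v$.

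Maximality and uniqueness: suppose $V'$ is any subspace with $Q\cap V'$ dense in $V'$. Then $V'\subset\overline{Q}$, and by the maximality in Theorem~\ref{closed}, $V'\subset V$. Now suppose $Q=(Q\cap V')\oplus(Q\cap W')$ is another decomposition of the stated type. Taking closures, $\overline{Q}=V'\oplus(Q\cap W')$, because a dense subgroup of $V'$ plus a lattice in a complement is closed after closing the first summand. Maximality of $V$ then forces $V=V'$. The main obstacle is the choice of $W$: it must be spanned by lifts in $Q$ of a basis of the discrete image $\pi(Q)$, and the key point making this possible is the equality $\pi(Q)=\pi(\Lambda)$.
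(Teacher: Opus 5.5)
Your proof is correct and is essentially the paper's: you apply Theorem~\ref{closed} to $\overline{Q}$, use that $\pi(Q)$ is dense in the discrete group $\pi(\overline{Q})$ and hence equal to it, and take $W$ spanned by lifts in $Q$ of a $\Z$--basis of $\pi(Q)$, verifying the decomposition the same way (your density step via discreteness of $\pi(\Lambda)$ is a harmless variant of the paper's use of the projection $p_V$). For uniqueness the paper instead shows that $Q\cap(V_1+V_2)$ is dense in $V_1+V_2$; your observation that any $V'$ with $Q\cap V'$ dense in $V'$ lies in $\overline{Q}$, hence in $V$, is cleaner and makes explicit that the Bourbaki $V$ is the greatest such subspace, a point the paper leaves implicit. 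One small fix in your extra argument about a second decomposition: maximality only gives $V'\subseteq V$, and the reverse inclusion needs the remark that projecting $V\subset V'\oplus(Q\cap W')$ along $V'$ sends a connected subspace into a discrete group, hence to $0$.
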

\noindent\begin{preuve}
Since $Q$ is a subgroup of $\R^n$, $\overline{Q}$ is a closed subgroup of $\R^n$. Moreover, since
$Q$ spans $\R^n$, so does $\overline{Q}$.
Applying Theorem~\ref{closed} to $\overline{Q}$, we obtain a
maximal vector subspace $V\subset\R^n$ such that, for every complement $W$ of
$V$, we have
$\overline{Q}=V\oplus\Lambda$,
where $\Lambda=\overline{Q}\cap W$
is a 
discrete subgroup of rank
$r=n-\dim(V)$, thus a lattice in $W$.
Let
$$
\pi:\R^n\longrightarrow \R^n/V
$$
be the quotient map. Since $Q$ is dense in $\overline{Q}$ and $\pi$ is continuous,
$\pi(Q)$ is dense in $\pi(\overline{Q})$. Since
$$
\pi(\overline{Q})\cong \overline{Q}/V\cong\Lambda,
$$
the group $\pi(\overline{Q})$ is a lattice in $\R^n/V$; in particular, it is discrete.

Let us show that
$\overline{\pi(Q)}=\pi(\overline{Q})$.
Since $Q\subset \overline{Q}$, we have
$
\overline{\pi(Q)}\subset \overline{\pi(\overline{Q})}.
$
However, $\pi(\overline{Q})$ is discrete, hence closed. Therefore
$
\overline{\pi(Q)}\subset \pi(\overline{Q}).
$
Conversely, consider an element $\pi(h)$, $h\in \overline{Q}$ and take
a sequence $(q_m)\subset Q$ converging to $h$.
By continuity, the sequence $(\pi(q_m))\subset \pi(Q)$ converges to $\pi(h)$.
Therefore
$\pi(\overline{Q})\subset \overline{\pi(Q)}$.

Since $\pi(\overline{Q})$ is discrete and $\pi(Q)$ is dense in
$\pi(\overline{Q})$, we obtain
$
\pi(Q)=\pi(\overline{Q}).
$

Now choose a $\Z$--basis of $\pi(Q)$
and write it as
$\{\pi(w_1),\ldots,\pi(w_r)\}$,
with $w_1,\ldots, w_r \in Q$. Take the subspace
$$
W=\operatorname{span}_{\R}(w_1,\ldots,w_r).
$$
Since the vectors $\pi(w_1),\ldots,\pi(w_r)$ form a basis of
$\R^n/V$, the subspace $W$ is a complement of $V$.

We claim that
$$
Q\cap W
=
\operatorname{span}_{\Z}(w_1,\ldots,w_r).
$$
Let $w\in Q\cap W$. Since $w\in W$, there exist real numbers
$a_1,\ldots,a_r$ such that
$w=\sum_{i=1}^r a_iw_i.$
Thus $\pi(w)=\sum_{i=1}^r a_i\pi(w_i)$.
Since $w\in Q$, its image via $\pi$ belongs to $\pi(Q)$, and therefore
$\pi(w)=\sum_{i=1}^r m_i\,\pi(w_i)$,
for uniquely determined integers $m_1,\ldots, m_r$. Thus $a_i=m_i$
for all $i$. Therefore
$w\in \operatorname{span}_{\Z}(w_1,\ldots,w_r).$
The reverse inclusion is obvious, proving the claim.
It follows that $Q\cap W$ is a discrete subgroup of $W$ of  rank
$r=n-\dim(V)$, and is thus a lattice in $W$.

Now let $u\in Q$. Since the classes $\pi(w_1),\ldots,\pi(w_r)$
generate $\pi(Q)$, there exist integers $h_1,\ldots,h_r$ such that
$\pi(u)=\sum_{i=1}^r h_i\,\pi(w_i)$.
Therefore
$$
u-\sum_{i=1}^r h_iw_i\in V.
$$
Since both terms belong to $Q$, we obtain
$$
u-\sum_{i=1}^r h_iw_i\in Q\cap V.
$$
Thus every element of $Q$ can be written as the sum of an element of
$Q\cap V$ and an element of $Q\cap W$. Therefore
$Q=(Q\cap V)+(Q\cap W)$.
Since $V\cap W=\{0\}$, we have
$(Q\cap V)\cap(Q\cap W)=\{0\}$,
and therefore
$$
Q=(Q\cap V)\oplus(Q\cap W).
$$

Let us now show that $Q\cap V$ is dense in $V$.
Let $v\in V\subset \overline{Q}$, and take a sequence $(q_m)\subset Q$ converging to $v$. Since $Q=(Q\cap V)\oplus (Q\cap W)$,  we can write $q_m=v_m+w_m$, with $v_m\in Q\cap V$ and $w_m\in Q\cap W$. Now, since $\R^n=V\oplus W$, we can consider the linear projection $p_V\colon \R^n\rightarrow V$. Since it is continuous, the sequence $(p_V(q_m))=(v_m)$ converges to $p_V(v)=v$. Therefore $Q\cap V$ is dense in $V$.

To conclude, we show that $V$ is unique. Suppose there are two maximal subspaces $V_1$ and $V_2$ such that $Q\cap V_1$ is dense in $V_1$, $Q\cap V_2$ is dense in $V_2$. Notice that $Q\cap (V_1+V_2)$ is dense in $V_1+ V_2$. Indeed, take $v_1+v_2\in V_1+V_2$. We know that there exists a sequence
$(v^1_m)\subset Q\cap V_1$ converging to $v_1$ and a sequence
$(v^2_m)\subset Q\cap V_2$ converging to $v_2$. Then the sequence $(v^1_m+v^2_m)\subset Q\cap (V_1+V_2)$ converges to $v_1+v_2$. Therefore, by maximality, $\dim(V_1+V_2)\leq \dim(V_1)=\dim(V_2)$, which implies $V_1=V_2$.   
\end{preuve}
\begin{dfn}[Maximal totally nonrational subspace]
\rm{Given a quasilattice $Q$ in $\R^n$, we call the unique subspace $V$ of Theorem~\ref{quasi decomposition} \textit{maximal totally nonrational subspace} with respect to $Q$.}
\end{dfn}

We are now finally ready for the following
\begin{dfn}[Nonrationality degree of a quasilattice]
\rm{
Let $Q$ be a quasilattice in $\R^n$. The \textit{nonrationality degree} of $Q$, denoted
 $\deg_{\textsc{NR}}(Q)$, is the dimension of the maximal totally nonrational subspace $V$.}
\end{dfn}
We clearly have $$0\leq \deg_{\textsc{NR}}(Q)\leq n.$$

As a first, straightforward, consequence of
Theorem~\ref{quasi decomposition}, we get
\begin{crl}\label{grado quasireticolo} Let $Q$ be a quasilattice in $\R^n$. Then $\deg_{\textsc{NR}}(Q)=0$ if and only if $Q$ is a lattice.
\end{crl}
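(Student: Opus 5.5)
Both implications follow from Theorem~\ref{quasi decomposition}, combined with the characterization recalled above: a quasilattice is a lattice if and only if it is a discrete subset of $\R^n$. The plan is to prove each direction separately, using the decomposition $Q=(Q\cap V)\oplus(Q\cap W)$.

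Suppose first that $\deg_{\textsc{NR}}(Q)=0$, so that $V=\{0\}$. Then the complement $W$ is all of $\R^n$, and the decomposition reads $Q=\{0\}\oplus(Q\cap\R^n)=Q\cap W$. By Theorem~\ref{quasi decomposition}, $Q\cap W$ is a lattice in $W=\R^n$, hence $Q$ is a lattice.

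Conversely, suppose $Q$ is a lattice, hence discrete, hence closed. Then $Q\cap V$ is a discrete subgroup of $V$. On the other hand, $Q\cap V$ is dense in $V$ by Theorem~\ref{quasi decomposition}. I will show these two facts force $V=\{0\}$. Since $Q\cap V$ is discrete, it is closed in $V$. Being also dense, it equals $V$, so $V\subset Q$. But a nonzero real vector space is not discrete: if $v\neq 0$ lies in $V$, then $tv\to 0$ as $t\to 0$ with $tv\neq 0$, contradicting the discreteness of $Q$ at $0$. Therefore $V=\{0\}$ and $\deg_{\textsc{NR}}(Q)=0$.

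An alternative route for the converse avoids the decomposition: if $Q$ is a lattice, then $\overline{Q}=Q$ is discrete, so in Theorem~\ref{closed} the subspace $V\subset\overline{Q}$ must be $\{0\}$. Either way there is no real obstacle. The only point needing care is that "discrete and dense in $V$" forces $V=\{0\}$, which uses that discrete subgroups of $\R^n$ are closed.
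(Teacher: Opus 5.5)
Your proof is correct and is essentially the paper's argument: the paper states this corollary as a direct consequence of Theorem~\ref{quasi decomposition} without writing out details. Your two directions spell out exactly that consequence. If $V=\{0\}$, the decomposition reduces to $Q=Q\cap W$, a lattice in $W=\R^n$. Conversely, the discreteness of a lattice $Q$ is incompatible with $Q\cap V$ being dense in a nonzero $V$.
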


One of the crucial differences between lattices and quasilattices is in the notion of dual. Let us first recall from \cite{atq} the following
\begin{dfn}[Weak dual of a quasilattice]
\rm{Given a quasilattice $Q$ in $\R^n$, we define
the \textit{weak dual} of $Q$ to be the $\Z$--module
$$Q^\vee=\{\lambda\in(\R^n)^*\mid\lambda(Q)\subset \Z\}.$$}
\end{dfn}
As a consequence of Theorem~\ref{quasi decomposition}, we see that the elements of 
$Q^\vee$ only keep track of rational directions:
\begin{crl}\label{weakdual}
Let $Q$ be a quasilattice of $\R^n$ and consider the subspaces $V$, $W$ of Theorem~\ref{quasi decomposition}. Then we have
\begin{enumerate}
\item $Q^\vee\cong(Q\cap W)^*$ and is thus a discrete subgroup
of $(\R^n)^*$ of rank $r=n-\deg_{\textsc{NR}}(Q)$;
\item 
$V=\operatorname{Ann}(Q^\vee)$.
\end{enumerate}
\end{crl}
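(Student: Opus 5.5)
The plan is to use the decomposition $Q=(Q\cap V)\oplus(Q\cap W)$ of Theorem~\ref{quasi decomposition} together with the splitting $(\R^n)^*=V^*\oplus W^*$, where we identify $V^*$ with $\operatorname{Ann}(W)$ and $W^*$ with $\operatorname{Ann}(V)$. The key observation is that every $\lambda\in Q^\vee$ must vanish on $V$. Indeed, $\lambda$ is continuous and $\lambda(Q\cap V)\subset\Z$, while $Q\cap V$ is dense in $V$. Hence $\lambda(V)$ lies in the closure of $\lambda(Q\cap V)$, which is contained in $\Z$. On the other hand, $\lambda(V)$ is a connected subset of $\R$ containing $0$, because it is a linear subspace. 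So $\lambda(V)=\{0\}$. This gives $Q^\vee\subset\operatorname{Ann}(V)$.

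We then identify $Q^\vee$ via restriction to $W$. Consider the map $\rho\colon Q^\vee\to W^*$, $\lambda\mapsto\lambda|_W$.
\begin{enumerate}
\item $\rho$ is injective, since $\lambda$ vanishes on $V$ and $\R^n=V\oplus W$.
\item Its image is exactly the set of $\mu\in W^*$ with $\mu(Q\cap W)\subset\Z$. For the reverse direction, extend such a $\mu$ by zero on $V$; the decomposition of $Q$ then shows that the extension takes integer values on $Q$.
\end{enumerate}
Since $Q\cap W$ is a lattice in $W$ with a $\Z$--basis $w_1,\ldots,w_r$ that is also an $\R$--basis of $W$, the image of $\rho$ is the lattice $\operatorname{span}_{\Z}(w_1^*,\ldots,w_r^*)$ in $W^*$. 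Here $(\cdot)^*$ denotes the $\Z$--dual of $Q\cap W$, which is identified with this dual lattice. It follows that $Q^\vee\cong(Q\cap W)^*$, and that it is a discrete subgroup of $\operatorname{Ann}(V)\subset(\R^n)^*$. Its rank is $r=\dim W=n-\deg_{\textsc{NR}}(Q)$. Discreteness in $(\R^n)^*$ holds because $\operatorname{Ann}(V)$ is a closed subspace and $Q^\vee$ is a lattice in it.

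For part (2), we already have $V\subset\operatorname{Ann}(Q^\vee)$ from the first step. Conversely, $Q^\vee$ contains the dual basis elements $w_i^*$ extended by zero on $V$. These span $\operatorname{Ann}(V)$ over $\R$, so
$$\operatorname{Ann}(Q^\vee)=\operatorname{Ann}(\operatorname{Ann}(V))=V.$$

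The main subtle point is the first step: the vanishing of integer-valued linear functionals on the dense part, which needs density and continuity. The rest is linear algebra.
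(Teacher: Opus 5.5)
Your proof is correct and follows essentially the same route as the paper: every $\lambda\in Q^\vee$ vanishes on $V$ by continuity, density and connectedness, and $Q^\vee$ is then identified with the weak dual of the lattice $Q\cap W$ by restricting to $W$. The only minor difference is in part (2). You show directly that $Q^\vee$ spans $\operatorname{Ann}(V)$ and apply the double annihilator, whereas the paper concludes with the dimension count $\dim\operatorname{Ann}(Q^\vee)=n-\operatorname{rank}(Q^\vee)=\dim V$.
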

\noindent\begin{preuve}
(1)
The direct sum decomposition
$\R^n=V\oplus W$ allows us to 
write $\lambda=\lambda|_V+\lambda|_W$
for every $\lambda\in (\R^n)^*$.  Now notice that, 
if $\lambda \in Q^\vee$,
by continuity, $\lambda(\overline{Q})\subset \Z$. Thus, since $V\subset \overline{Q}$
and $V$ is connected, we have $\lambda|_V=0$.
Therefore, $\lambda=\lambda|_W$ and
$$Q^\vee=\{\lambda\in(\R^n)^*\mid\lambda(Q)\subset \Z\}\cong\{\mu\in  W^*\mid\mu(Q\cap W)\subset \Z\}=(Q\cap W)^*.$$

\noindent (2) We have already noticed that $\lambda|_V=0$, for all $\lambda\in Q^\vee$.
Thus $V\subset \operatorname{Ann}(Q^\vee)$.
However, by (1) we have
$$\dim(\operatorname{Ann}(Q^\vee))=n-\operatorname{rank}(Q^\vee)=n-(n-\deg_{\textsc{NR}}(Q))=\dim(V).$$ So the two subspaces must coincide.
\end{preuve}

\begin{rem}\label{hom}
\rm{
Consider now the actual dual,
$\operatorname{Hom}(Q,\Z)$; it is a free $\Z$--module having the same rank as $Q$. 
When $Q$ is a lattice, $\operatorname{Hom}(Q,\Z)$ is naturally identified with $Q^\vee$ and
can be thus thought of as a lattice in $(\R^n)^*$. 
However, if $\deg_{\textsc{NR}}(Q)\neq 0$, by Corollary~\ref{weakdual},
this is no longer the case. This fact has 
some notable consequences in nonrational toric geometry (see \cite{atq}
and Remark~\ref{ring} below).}
\end{rem}
It will be useful, for what is to follow, to have Theorem~\ref{quasi decomposition} reframed as
\begin{crl}\label{iso}
Let $Q$ be a quasilattice in $\R^n$ and let $h=\deg_{\textsc{NR}}(Q).$ Then
there is an isomorphism
$$f\colon\R^n\longrightarrow \R^n=\R^h\oplus\R^{n-h}$$
such that 
$$Q\cong f(Q)=(f(Q)\cap \R^h)\oplus\Z^{n-h},
$$
where $f(Q)\cap \R^h$ is a dense quasilattice in $\R^h$.
\end{crl}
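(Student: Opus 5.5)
The plan is to build $f$ directly from the decomposition of Theorem~\ref{quasi decomposition}. Let $V$ be the maximal totally nonrational subspace, so $\dim V=h$, and let $W$ be the complement given there. From that proof we also have vectors $w_1,\ldots,w_{n-h}\in Q$ such that $Q\cap W=\operatorname{span}_{\Z}(w_1,\ldots,w_{n-h})$, and these form an $\R$--basis of $W$.

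Pick any $\R$--basis $v_1,\ldots,v_h$ of $V$. Then $v_1,\ldots,v_h,w_1,\ldots,w_{n-h}$ is a basis of $\R^n=V\oplus W$. Define $f$ as the linear isomorphism sending this basis to the standard basis $e_1,\ldots,e_n$ of $\R^n=\R^h\oplus\R^{n-h}$. By construction $f(V)=\R^h$ and $f(W)=\R^{n-h}$. Since $Q\cap W$ is the $\Z$--span of the $w_i$, we get $f(Q\cap W)=\operatorname{span}_{\Z}(e_{h+1},\ldots,e_n)=\Z^{n-h}$.

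Because $f$ is a bijective linear map, it preserves intersections and direct sums. Hence
$$f(Q)=f(Q\cap V)\oplus f(Q\cap W),$$
and $f(Q\cap V)=f(Q)\cap f(V)=f(Q)\cap\R^h$. This gives $f(Q)=(f(Q)\cap\R^h)\oplus\Z^{n-h}$. The isomorphism $Q\cong f(Q)$ as groups is just the restriction of $f$.

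It remains to check that $f(Q)\cap\R^h$ is a dense quasilattice in $\R^h$.
\begin{itemize}
\item \emph{Density.} $Q\cap V$ is dense in $V$ by Theorem~\ref{quasi decomposition}, and $f$ is a homeomorphism, so $f(Q\cap V)$ is dense in $\R^h$.
\item \emph{Quasilattice.} $Q\cap V$ is a subgroup of the finitely generated free abelian group $Q$, so it is finitely generated. It can also be written as the image of the projection $p_V\colon Q\to V$ along $W$, which is likewise finitely generated. It is therefore the $\Z$--span of finitely many vectors. These vectors span $V$ over $\R$: a dense subgroup of $V$ cannot lie in a proper subspace, since proper subspaces are closed.
\end{itemize}
So $f(Q)\cap\R^h$ is the $\Z$--span of an $\R$--spanning set of $\R^h$, that is, a quasilattice.

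The step needing the most care is this last one, that $Q\cap V$ qualifies as a quasilattice in $V$ (finite generation plus spanning). It is handled by the facts that subgroups of finitely generated free abelian groups are finitely generated and that density forces spanning. A minor degenerate case arises when $h=0$ or $h=n$: there one factor is trivial, and the statement reduces to Corollary~\ref{grado quasireticolo} or to $Q$ itself being dense.
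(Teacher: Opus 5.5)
Your proposal is correct and follows the paper's proof: take a basis of $V$ together with a $\Z$--basis of $Q\cap W$ and let $f$ send it to the standard basis of $\R^n$. You also verify details the paper leaves implicit, namely that $f(Q)\cap\R^h$ is finitely generated and spans $\R^h$ (so it is a quasilattice) and that $f$ carries the decomposition over. Those checks are sound.
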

\noindent\begin{preuve}
Consider the subspaces $V$ and $W$ of Theorem~\ref{quasi decomposition}.
Choose a basis $\{v_1,\ldots ,v_h, w_1,\ldots, w_{n-h}\}$ of $\R^n$ such that $\{v_1,\ldots,v_h\}$ is a basis of $V$ and
$\{w_1,\ldots,w_{n-h}\}$ is a $\Z$--basis of $Q\cap W$. Let $f$ be the isomorphism that sends
this basis to the standard basis of $\R^n$. 
\end{preuve}

We conclude this section with some examples.
\begin{example}\label{quasiuno}
\rm{Let us consider the quasilattice in $\R$ given by $Q=\Z+a\Z$, with $a$ a 
\textit{positive} real number. 
The nonrationality degree obviously depends on $a$. In fact,
if $a\in\Z$, we have $Q=\overline{Q}=\Z,\; V=\{0\},\;  Q^\vee=\Z.$ If $a\in \Q \setminus \Z$, and we write $a=\frac pq$, with $p,q \in \Z_{> 0}$ coprime, then 
$Q=\overline{Q}=\frac{1}{q}\Z,\; V=\{0\},\; Q^{\vee}=q\Z.$
Finally, if $a\notin \Q$, then
$\overline{Q}=\R,\; V=\R,\; Q^{\vee}=\{0\}.$
In conclusion,
$$\deg_{\textsc{NR}}(Q)=
\left\{\begin{array}{l}
0 \quad\quad a\in\Q \\
1 \quad\quad a\notin\Q.
\end{array}
\right.
$$}
\end{example}
\begin{example}\label{quasidue}
\rm{Consider the quasilattice $Q=\Z\times(\Z+a\Z)$ in $\R^2$, where
$a$ is a \textit{positive} real number.
Then, if $a\in\Z$, we have $Q=\overline{Q}=\Z^2,\, V=\{0\},\; Q^\vee=\Z^2.$ If $a\in \Q\setminus \Z$,
and we write $a=\frac pq$, with $p,q \in \Z_{> 0}$ coprime, then 
$Q=\overline{Q}=\Z\times\frac{1}{q}\Z,\; V=\{0\},\;Q^{\vee}=\Z\times q\Z.$
Finally, if $a\notin \Q$,
then $\overline{Q}=\Z\times \R,\; V=\{0\}\times\R, \; Q^{\vee}=\Z\times \{0\}.$
Here we have, again,
$$\deg_{\textsc{NR}}(Q)=
\left\{\begin{array}{l}
0 \quad\quad a\in\Q \\
1 \quad\quad a\notin\Q.
\end{array}
\right.
$$}
\end{example}
\begin{example}\label{quasicinque}
\rm{Consider the fifth roots of unity in $\C\cong\R^2$
$$\begin{array}{l}
u_0=(1,0)\\ 
u_1=(\cos{\frac{2\pi}{5}},\sin{\frac{2\pi}{5}})\\ u_2=(\cos{\frac{4\pi}{5}},\sin{\frac{4\pi}{5}})\\ u_3=(\cos{\frac{6\pi}{5}},\sin{\frac{6\pi}{5}})\\ u_4=(\cos{\frac{8\pi}{5}},\sin{\frac{8\pi}{5}}).\end{array}$$
Their $\Z$--span, $Q_5$, is a dense quasilattice in $\R^2$. 
To see this, notice that the fifth roots of unity generate a $4$--dimensional $\mathbb{Q}$--vector space, since the fifth cyclotomic polynomial has degree $4$. Hence $\text{rank}(Q_5)=4$. By Theorem~\ref{quasi decomposition} and Corollary~\ref{grado quasireticolo}, there exists a unique maximal linear subspace $V$ of $\R^2$ such that $Q_5\cap V$ is dense in $V$, and $\text{dim}(V)\geq 1$. Since $Q_5$ is invariant under rotation by an angle $2\pi/5$, by uniqueness, so is $V$. As no real one--dimensional subspace is invariant under such a rotation, it follows that $\dim V=2$, hence $V=\mathbb{R}^2$.
This quasilattice turns out to be the crucial ingredient in the study of Penrose rhombus
tilings and of the Penrose kite from the nonrational symplectic toric viewpoint
(see \cite{kite} and references therein, and also Example~\ref{kite} below).
Here we have
$\overline{Q}=V=\R^2, \; Q^{\vee}=0$ and, finally, $\deg_{\textsc{NR}}(Q)=2$.
}
\end{example}
\section{Structure theorems for quasitori}\label{quasitorus}
We begin by recalling the following
\begin{dfn}[Real and complex quasitori]{\rm Let $Q$ be a quasilattice in $\R^n$, the group $\R^n/Q$ is called a 
\textit{real quasitorus} of dimension $n$ and the group $\C^n/Q$ is called a \textit{complex quasitorus} of dimension $n$. 
}    
\end{dfn}
Notice that when $Q$ is a lattice, these groups are, respectively, a real torus and a complex torus. At the other extreme we have
\begin{dfn}[Totally nonrational quasitori]
\rm{Consider a quasilattice $Q$ in $\R^n$. We say that
the quasitori $\R^n/Q$ and $\C^n/Q$ are \textit{totally nonrational}
if $Q$ is dense in $\R^n$.}
\end{dfn}
The general case is a bit of both, as described in the following structure theorems.
\begin{thm}[Structure theorem for a real quasitorus]\label{realquasitorusdecomp}
Let $Q$ be a quasilattice in $\R^n$, and
let $h=\deg_{\textsc{NR}}(Q)$. Then
$$\R^n/Q\cong D^h\times \T^{n-h},$$ where
$D^h$ is a totally nonrational real quasitorus of dimension $h$ and $\T^{n-h}$ is the standard torus of dimension $n-h$.
\end{thm}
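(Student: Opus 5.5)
The plan is to reduce everything to the normal form of Corollary~\ref{iso} and then split the quotient coordinatewise. By Corollary~\ref{iso}, there is a linear isomorphism $f\colon\R^n\to\R^h\oplus\R^{n-h}$ with
$$f(Q)=Q_1\oplus\Z^{n-h},\qquad Q_1:=f(Q)\cap\R^h,$$
where $Q_1$ is a dense quasilattice in $\R^h$. Since $f$ is a group isomorphism (indeed a homeomorphism) carrying $Q$ onto $f(Q)$, it descends to an isomorphism of quotient groups
$$\R^n/Q\;\cong\;\R^n/f(Q).$$
So it suffices to understand $\R^n/f(Q)$.

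Next I use that the quotient of a direct sum by a direct sum of subgroups, one in each summand, splits. Consider the surjective homomorphism
$$\R^h\oplus\R^{n-h}\longrightarrow \R^h/Q_1\times\R^{n-h}/\Z^{n-h},\qquad (x,y)\longmapsto([x],[y]).$$
Its kernel is exactly $Q_1\oplus\Z^{n-h}=f(Q)$. Hence
$$\R^n/f(Q)\cong \R^h/Q_1\times \R^{n-h}/\Z^{n-h}=D^h\times\T^{n-h},$$
where $D^h:=\R^h/Q_1$.

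It remains to check that $D^h$ is a totally nonrational real quasitorus of dimension $h$. First, $Q_1$ must be a quasilattice in $\R^h$, i.e. the $\Z$--span of a set of $\R$--spanning vectors.
\begin{itemize}
\item It is a subgroup of the finitely generated free $\Z$--module $f(Q)$, hence finitely generated.
\item It spans $\R^h$ because it is dense there.
\end{itemize}
The quasilattice property is in fact already asserted in Corollary~\ref{iso}, so I would simply cite it. Density of $Q_1$ in $\R^h$, also from Corollary~\ref{iso}, is precisely the definition of total nonrationality. The degenerate cases $h=0$ and $h=n$ are included, with $D^0$ trivial and $\T^0$ trivial respectively.

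I expect no real obstacle. The only point needing care is the one just addressed: justifying that $Q_1$ is genuinely a quasilattice and not merely a dense subgroup, which rests on finite generation inherited from $Q$. If one wants the isomorphism to be topological as well, the same maps are open and continuous, so the identification also holds as quotient topological groups.
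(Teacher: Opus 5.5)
Your proof is correct and matches the paper's argument: apply Corollary~\ref{iso} to write $f(Q)=(f(Q)\cap\R^h)\oplus\Z^{n-h}$, then split the quotient into $\R^h/(f(Q)\cap\R^h)$ times $\R^{n-h}/\Z^{n-h}$. Your extra checks, namely the kernel computation and the verification that $f(Q)\cap\R^h$ is a genuine quasilattice (finite generation plus density), only spell out details the paper leaves implicit.
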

\noindent\begin{preuve}
By Corollary~\ref{iso}, we have
$$Q\cong(f(Q)\cap \R^h)\oplus\Z^{n-h},$$ where $f(Q)\cap \R^h$ is dense in $\R^h$. Thus
$$\R^n/Q\cong \left(\R^{h}/(f(Q)\cap\R^h)\right) \times \left(\R^{n-h}/\Z^{n-h}\right).$$
\end{preuve}

Similarly, in the complex case, we have
\begin{thm}[Structure theorem for a complex quasitorus]\label{complexquasitorusdecomp}
Let $Q$ be a quasilattice in $\R^n$, and
let $h=\deg_{\textsc{NR}}(Q)$. Then
$$\C^n/Q\cong D_{\C}^h\times (\C^*)^{n-h},$$ where
$D_{\C}^h$ is a totally nonrational complex quasitorus of dimension $h$.
\end{thm}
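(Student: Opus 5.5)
The plan is to mirror the proof of Theorem~\ref{realquasitorusdecomp}, using Corollary~\ref{iso} and then identifying $\C/\Z$ with $\C^*$ via the exponential map. Here $Q\subset\R^n\subset\C^n$ acts on $\C^n$ by translation, so $\C^n/Q$ is the quotient of $\C^n=\R^n\oplus i\R^n$ by $Q$ sitting in the real part.

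First, extend the real isomorphism $f\colon\R^n\to\R^n$ of Corollary~\ref{iso} $\C$--linearly to $f_\C\colon\C^n\to\C^n$. Since $f_\C$ restricts to $f$ on $\R^n$ and maps $Q$ onto $f(Q)$, it induces a group isomorphism
$$\C^n/Q\cong\C^n/f(Q).$$
Second, write $\C^n=\C^h\oplus\C^{n-h}$, compatibly with $\R^n=\R^h\oplus\R^{n-h}$. By Corollary~\ref{iso}, $f(Q)=(f(Q)\cap\R^h)\oplus\Z^{n-h}$, with the first summand in $\R^h\subset\C^h$ and the second in $\R^{n-h}\subset\C^{n-h}$. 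The quotient therefore splits as a product:
$$\C^n/f(Q)\cong\left(\C^h/(f(Q)\cap\R^h)\right)\times\left(\C^{n-h}/\Z^{n-h}\right).$$
The first factor is a complex quasitorus of dimension $h$. It is totally nonrational, since $f(Q)\cap\R^h$ is a quasilattice dense in $\R^h$: it spans $\R^h$ because it is dense. We set $D^h_\C=\C^h/(f(Q)\cap\R^h)$.

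Finally, the map $(z_1,\ldots,z_{n-h})\mapsto(e^{2\pi i z_1},\ldots,e^{2\pi i z_{n-h}})$ is a surjective homomorphism $\C^{n-h}\to(\C^*)^{n-h}$ with kernel exactly $\Z^{n-h}$. Hence $\C^{n-h}/\Z^{n-h}\cong(\C^*)^{n-h}$, and combining the three steps gives the statement.

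None of the steps is a serious obstacle. The only point needing care is to make explicit that the isomorphism holds in the intended category: as groups, and in fact compatibly with the quotient topology and the complex structure, since $f_\C$ is $\C$--linear and the exponential map is holomorphic.
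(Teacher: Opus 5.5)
Your proposal is correct and takes the same route as the paper. The paper proves this theorem ``similarly'' to Theorem~\ref{realquasitorusdecomp}: it applies Corollary~\ref{iso} (extended $\C$--linearly) to split $\C^n/Q$ as $\C^h/(f(Q)\cap\R^h)\times\C^{n-h}/\Z^{n-h}$, and your identification $\C^{n-h}/\Z^{n-h}\cong(\C^*)^{n-h}$ via $z\mapsto e^{2\pi i z}$ is exactly the step the paper leaves implicit.
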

Motivated by this, we have the following
\begin{dfn}[Nonrationality degree of a quasitorus]
\rm{
Let $Q$ be a quasilattice in $\R^n$. We call \textit{nonrationality degree} of the quasitorus $\R^n/Q$ (respectively $\C^n/Q$), denoted $\deg_{\textsc{NR}}(\R^n/Q)$ (respectively $\deg_{\textsc{NR}}(\C^n/Q)$), the nonrationality degree of $Q$.}
\end{dfn}
Let us see some examples. To avoid unnecessary repetition, we will be focusing on the real case.

\begin{example}\label{quasitorouno}
\rm{Let us consider the quasilattice $\Z+a\Z$ in $\R$ , where $a$ is a \textit{positive} real number
(see Example~\ref{quasiuno})
and take the quasitorus $$\R/(\Z+a\Z).$$
If $a\in\Z$, we obtain $\R/\Z\cong S^1$.
If $a\in \Q\setminus \Z$, and we write $a=\frac pq$, with $p,q \in \Z_{> 0}$ coprime, we get
$\R/\left(\frac{1}{q}\Z\right)\cong S^1$. 
Finally, if $a\notin \Q$, we get the \textit{irrational torus} $\mathbf{T}^a$, 
which was first introduced within the framework of diffeology by Donato--Iglesias \cite{di}.
In conclusion, we have
$$\deg_{\textsc{NR}}(\R/(\Z+a\Z))=
\left\{\begin{array}{l}
0 \quad\quad a\in\Q \\
1 \quad\quad a\notin\Q.
\end{array}
\right.
$$}
\end{example}
\begin{example}\label{quasitorodue}
\rm{Consider the quasilattice $Q=\Z\times(\Z+a\Z)$ in $\R^2$, where
$a$ is a \textit{positive} real number (see Example~\ref{quasidue}) and take the quasitorus
$$\R^2/(\Z\times(\Z+a\Z)).$$
Then, if $a\in\Z$, we obtain 
$\R^2/\Z^2$.
If $a\in \Q\setminus \Z$,
and we write $a=\frac pq$, with $p,q \in \Z_{> 0}$ coprime, then we get
$(\R/\Z)\times \left(\R/\frac{1}{q}\Z\right)\cong \R^2/\Z^2$.
Finally, if $a\notin \Q$ we get $(\R/\Z)\times \left(\R/(\Z+a\Z)\right)\cong S^1\times \mathbf{T}^a$.
In particular,
$$\deg_{\textsc{NR}}(\R^2/(\Z\times(\Z+a\Z)))=
\left\{\begin{array}{l}
0 \quad\quad a\in\Q \\
1 \quad\quad a\notin\Q.
\end{array}
\right.
$$

}
\end{example}
\begin{example}\label{quasitorocinque}
\rm{Take the  $\Z$--span, $Q_5$, of the fifth roots of unity (see Example~\ref{quasicinque}).
Since $Q_5$ is a dense quasilattice in $\R^2$, the quasitorus $\R^2/Q_5$ is totally nonrational
and its nonrationality degree is $2$.
}
\end{example}

\section{Nonrationality degree of a toric quasifold}\label{quasifold}

Let $\Sigma$ be a complete simplicial fan in $\R^n$ (we refer the reader to \cite{cox} for the necessary background on fans).

We recall that a \textit{fundamental triple} for $\Sigma$ is given by
$$(\Sigma, Q, \{v_1,\ldots,v_d\}),$$ where 
$Q$ is a quasilattice in $\R^n$, and the vectors $v_1,\ldots,v_d\in Q$ generate the rays of $\Sigma$.
One way of obtaining a fundamental triple for \textit{any} $\Sigma$ is to first choose a set of ray generators and then take $Q$ to be their $\Z$--span.

We recall that the fan $\Sigma$ is \textit{rational} if all of its rays have nonzero intersection with a lattice $L$. Notice that, in this case, there is an underlying canonical triple, obtained by taking $Q$ to be equal to $L$ and the ray generators to be primitive in $L$.

To any fundamental triple, there corresponds a toric quasifold, which we will denote
by $X_\Sigma$. For the symplectic construction of $X_\Sigma$, which holds when $\Sigma$ is also polytopal, see \cite{p}; its complex and algebraic--geometric counterparts are treated in \cite{cx} and \cite{atq}, respectively. The space $X_\Sigma$ is a toric manifold or orbifold if and only if the fan $\Sigma$ is rational and $Q$ is a lattice, if and only if $\deg_{\textsc{NR}}(Q)=0$.

Analogously to what happens in the rational case, the quasitorus acts on $X_\Sigma$ and, in the complex setting, there is a dense open orbit $$\mathcal{O}_o\cong \C^n/Q$$ which corresponds to the zero cone; all other points of $X_\Sigma$ belong to the lower dimensional orbits corresponding to the nonzero cones of $\Sigma$. It is natural to define the nonrationality degree of $X_\Sigma$ to be the same as that of the quasitorus, namely

\begin{dfn}[Nonrationality degree of a toric quasifold]
\rm{We define the \textit{nonrationality degree} of $X_\Sigma$ to be the nonrationality degree of $Q$, denoted $\deg_{\textsc{NR}}(X_\Sigma)$.}
\end{dfn}

\begin{dfn}[Totally nonrational quasifold]
\rm{We say that the quasifold $X_\Sigma$ is \textit{totally nonrational} when the
corresponding quasilattice $Q$ is dense.}
\end{dfn}

Let us compute the nonrationality degree of some notable examples of toric quasifolds.
\begin{example}[Quasisphere]\label{quasisphere}
\rm{Consider the \textit{quasisphere} $X_a$, namely the
toric quasifold corresponding to the triple
$$
(\Sigma, \Z+a\Z, \{a,-1\}),
$$
where $a$ is a positive real number. $\Sigma$ is the complete simplicial fan in $\R$ given 
by the cones $\R_{\geq 0}$, $\R_{\leq 0}$, and $\{0\}$. We refer the
reader to \cite[Example~1.13]{p} for the symplectic construction, to \cite[Examples~2.6, 3.8]{cx}, \cite[Example~3.1]{laurent} for the complex, and 
to \cite{atq} for the algebraic--geometric. 

Since the underlying quasilattice is $\Z+a\Z$, we have, by Example~\ref{quasiuno},
$$\deg_{\textsc{NR}}(X_a)=
\left\{\begin{array}{l}
0 \quad\quad a\in\Q \\
1 \quad\quad a\notin\Q.
\end{array}
\right.
$$
The quasisphere is totally nonrational if and only if $a\notin\Q$.}
\end{example}
\begin{example}[Generalized weighted projective space]\label{quasiprojective}
\rm{We consider now \textit{generalized weighted projective space} $\C\P^2_{(1,1,a)}$.
It is the toric quasifold corresponding to the triple
$$
(\Sigma, \mathbb{Z}\times (\mathbb{Z}+a\mathbb{Z}), \{(1,0), (-1,-a), (0,1)\}),
$$
where $a$ is any positive real number. $\Sigma$ 
is the complete simplicial fan in $\R^{2}$ whose rays are generated by the vectors  
 $(1,0)$, $(-1,-a)$, and $(0,1)$. See \cite[Example~3.6]{p}, \cite[Example~2.8]{cx}, \cite[Example~3.2]{laurent}, and \cite{atq} for descriptions of this toric quasifold in the symplectic, complex, and algebraic--geometric settings, respectively.

Since the underlying quasilattice here
is $\Z\times(\Z+a\Z)$, we have, by
Example~\ref{quasidue},
$$\deg_{\textsc{NR}}(\C\P^2_{(1,1,a)})=
\left\{\begin{array}{l}
0 \quad\quad a\in\Q \\
1 \quad\quad a\notin\Q.
\end{array}
\right.
$$
}
\end{example}
\begin{example}[Generalized Hirzebruch surface]\label{quasihirze}
\rm{Take now the generalized
Hirzebruch surface $\H_a$. This is the toric quasifold corresponding to the triple
$$
(\Sigma, \mathbb{Z}\times (\mathbb{Z}+a\mathbb{Z}), \{(1,0), (-1,-a), (0,1),(0,-1)\}),
$$
where $a$ is again any positive real number.
$\Sigma$ is the
complete simplicial fan in $\R^{2}$ whose rays are generated by the vectors
$(1,0)$, $(-1,-a)$, $(0,1)$, and $(0,-1)$.
The toric quasifold $\H_a$ was first introduced in both the symplectic and complex setting in 
\cite{hirze}; see \cite{atq} for the algebraic--geometric version. The corresponding quasilattice is
again $\Z\times(\Z+a\Z)$, so we have
$$\deg_{\textsc{NR}}(\H_a)=
\left\{\begin{array}{l}
0 \quad\quad a\in\Q \\
1 \quad\quad a\notin\Q.
\end{array}
\right.
$$
}
\end{example}
\begin{example}[The Penrose kite quasifold]\label{kite}
\rm{Let us consider the toric quasifold
$X_K$ corresponding to the triple
$$
(\Sigma, Q_5, \{v_1, v_2, v_3,v_4\}),
$$
where $v_1=-u_1$, $v_2=-u_3$, $v_3=u_2$, $v_4=u_4$
(see Example~\ref{quasicinque}). 
$\Sigma$ is the complete simplicial fan 
whose rays are generated by these vectors. It
is the normal fan of
the Penrose kite. This toric quasifold
 was introduced and studied from the symplectic viewpoint in
\cite{kite}; see \cite[Example~3.2]{laurent} for the complex case and \cite{atq} for the algebraic--geometric. 
Since the underlying
quasilattice is the pentagonal quasilattice $Q_5$, by Example~\ref{quasicinque}, we have
that $\deg_{\textsc{NR}}(X_K)=2$. The toric quasifold $X_K$ is totally nonrational.}
\end{example}

\section{Nonrationality degree of fans and polytopes}\label{fan}
Let $\Sigma$ be a complete simplicial fan  in $\R^n$.
Take the set
$\mathcal{Q}_\Sigma$ of all quasilattices $Q\subset \R^n$ that have nonzero
intersection with all rays of $\Sigma$.
\begin{dfn}[Nonrationality degree of a fan]
We define the nonrationality degree of 
$\Sigma$ to be the nonnegative integer $$\deg_{\textsc{NR}}(\Sigma)=\min_{Q\in\mathcal{Q}_\Sigma}\deg_{\textsc{NR}}(Q).
$$
\end{dfn}
Note that the minimum is taken over a nonempty set of nonnegative integers; hence, it is attained.

Let us consider now an $n$--dimensional simple polytope $\Delta\subset \R^n$
(a standard reference for polytopes is \cite{ziegler}).
Let $\Sigma_\Delta$ be its normal fan; it is a complete simplicial fan. We recall that the polytope is rational if and only if its
normal fan is.
\begin{dfn}[Nonrationality degree of a polytope]
\rm{We define the 
\textit{nonrationality degree} of
$\Delta$ to be the nonrationality degree of 
$\Sigma_\Delta$.}
\end{dfn}
\begin{rem}\rm{
Clearly, the fan $\Sigma$ is rational if and only if its nonrationality degree is $0$. Notice also that $$\deg_{\textsc{NR}}(\Sigma)
\leq \deg_{\textsc{NR}}(X_\Sigma)$$ 
for any associated quasifold $X_\Sigma$. In other words, the nonrationality degree of the fan provides a lower bound for the nonrationality degree of the toric quasifold. A similar discussion applies to the polytope $\Delta$.}
\end{rem}
Let us now compute the nonrationality degree of the fans that we introduced in the examples of Section~\ref{quasifold}.
\begin{example}
\rm{The fan of Example~\ref{quasisphere}
is rational with respect to $\Z$, so its nonrationality degree is $0$.}
\end{example}

\begin{example}
\rm{The fans of Examples~\ref{quasiprojective} and \ref{quasihirze} are rational with respect to the lattice $\Z\times a\Z$, so the nonrationality degree of both is also $0$.}
\end{example}\begin{example}
\rm{The Penrose kite (see Example~\ref{kite}), and its normal fan,
are nonrational. In fact, fix arbitrary positive real multiples of the 
four fifth roots of unity that generate the rays of $\Sigma$: $\{r_1u_1,r_2u_2,r_3u_3,r_4u_4\}$
(see Example~\ref{quasicinque}). We claim that no lattice contains all these vectors. Suppose that $L$ is such a lattice, and let 
 $\{w_1,w_2\}$ be a basis of $L$. Then, for all $i$, $r_iu_i=m_iw_1+n_iw_2$, with $m_i,n_i\in\Z$.  Denote by $a_{ij}=\det(r_iu_i,r_ju_j)$. We have $$a_{ij}=r_ir_j\det(u_i,u_j)=r_ir_j\sin\left(\frac{2(j-i)\pi}{5}\right).$$
However, we also have
 $$a_{ij}=(m_in_j-m_jn_i)\det(w_1,w_2).$$ 
 Consider now the ratio
 $$\frac{a_{12}a_{34}}{a_{41}a_{23}}$$ 
 By the first identity, it is 
 equal to the golden ratio $\frac{1+\sqrt{5}}{2}$, whereas the second identity shows that it is rational, a contradiction.
The nonrationality degree is actually $1$: normalize the fifth roots of unity so that each has $x$--coordinate equal to either 0 or 1, and then consider their $\Z$--span, $\widetilde{Q}_5$. 
By construction, $\deg_{\textsc{NR}}(\widetilde{Q}_5)=1$.}
\end{example}

\section{Gordan's Lemma revisited}\label{gordan}
Let $Q$ be a quasilattice in $\R^n$
and consider a maximal simplicial cone $\sigma$ in $\R^n$ whose rays have nonzero intersection with $Q$. Gordan's Lemma (see, for example, \cite[Proposition~1.2.17]{cox}) fails in this setting.
 As it turns out, this failure is measured precisely by the nonrationality degree of $Q$:
\begin{thm}[Gordan's Lemma revisited]\label{gordanlemma} Let $Q$ be a quasilattice in $\R^n$ and let $\sigma$ be a maximal simplicial cone whose rays all have nonzero intersection with $Q$. Then the semigroup $\sigma\cap Q$ is finitely generated if and only if $\deg_{\textsc{NR}}(Q)=0$.
\end{thm}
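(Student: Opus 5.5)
We prove the two implications separately. If $\deg_{\textsc{NR}}(Q)=0$, then by Corollary~\ref{grado quasireticolo} $Q$ is a lattice $L$. The cone $\sigma$ is simplicial and its rays meet $L$ nontrivially, so $\sigma$ is generated by vectors of $L$ and is therefore a rational polyhedral cone with respect to $L$. Passing to the dual picture, $\sigma\cap L$ is exactly the setting of the classical Gordan's Lemma \cite[Proposition~1.2.17]{cox}, which gives finite generation. Concretely, let $u_1,\ldots,u_n\in L$ be ray generators. Then $\sigma\cap L$ is generated by $u_1,\ldots,u_n$ together with the finitely many lattice points of the compact parallelepiped $\{\sum t_iu_i\mid 0\le t_i<1\}$; finiteness holds because $L$ is discrete.

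For the converse, assume $h=\deg_{\textsc{NR}}(Q)>0$ and suppose $\sigma\cap Q$ is generated by finitely many elements $g_1,\ldots,g_k$. The plan is to find an element of $\sigma\cap Q$ lying too close to the origin in a direction that forces a contradiction. Since $\sigma$ is maximal, it has nonempty interior, and $\sigma\cap Q$ spans $\R^n$. By Theorem~\ref{quasi decomposition}, $Q\cap V$ is dense in $V$, where $\dim V=h\ge1$.

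The key step is to show that $Q$ contains nonzero points of the interior of $\sigma$ that are arbitrarily close to $0$. Pick $x$ in the interior of $\sigma$ with $x\in Q$; this is possible since, for instance, $x=u_1+\cdots+u_n$ with $u_i\in Q$ on the rays. Take a sequence $v_m\in Q\cap V$ with $v_m\to -x/N$ for a suitable $N$, or more simply use density to find $q_m\in Q\cap V$, $q_m\neq 0$, $q_m\to 0$. This is where $h>0$ is used, since $Q\cap V$ is dense and nondiscrete. Then $x+q_m\in\sigma\cap Q$ for $m$ large.

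To obtain the contradiction, fix a linear functional $\ell$ that is strictly positive on $\sigma\setminus\{0\}$. Any nonzero element of the semigroup generated by the $g_j$ satisfies $\ell\ge c:=\min_j \ell(g_j)>0$ (discarding any zero generators). So it suffices to produce nonzero elements $y\in\sigma\cap Q$ with $\ell(y)$ arbitrarily small. I expect this to be the main obstacle.

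I would handle it as follows. By Corollary~\ref{weakdual}(2), $V$ is not contained in the boundary structure in any way that prevents it from meeting the interior of $\sigma$ near a point. Rather than rely on that, I would use the following directly. Choose an interior point $p\in\sigma$ that is itself in $Q$ and lies in a slice where we can approximate. Let $y_m$ be an element of $Q$ approximating $\epsilon p$ for small $\epsilon$; such approximations exist only along $V$-directions, so instead approximate the point $\epsilon p$ by elements of $p+ (Q\cap V)$ shifted through integers. The cleanest route is this: $\overline{Q}=V\oplus\Lambda$ contains $V$, and $V\cap\operatorname{int}(\sigma-\text{translate})$ is handled by noting that $\overline{Q}\supset V$ intersects any open set of the form $z+B$ with $z\in V$. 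Since $V\neq\{0\}$ and $\sigma$ is full-dimensional, the open cone $\operatorname{int}\sigma$ meets either $V$ or a translate $\lambda+V$ with $\lambda\in\Lambda$ near $0$. To check that $\operatorname{int}\sigma\cap V\neq\emptyset$ may fail, I would use instead the open set $\operatorname{int}\sigma\cap\{\ell<\delta\}$, which is nonempty and open. This set contains points of $V\oplus\Lambda$ when $\Lambda$-translates cover it. If that fails, I would fall back on $\operatorname{int}\sigma\cap(x+V)$: take $y=x+v$ and $y'=x+v'$ with $v,v'\in Q\cap V$ close together, and compare. The semigroup then contains elements whose differences are arbitrarily small in $V$, and one shows that some finite combination lies in $\operatorname{int}\sigma$ with small $\ell$. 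This is done by considering $\ell$ restricted to $V$: if $\ell|_V\neq0$, density of $Q\cap V$ gives small positive values, adding to a suitable bounded element, contradicting the lower bound $c$ after rescaling.
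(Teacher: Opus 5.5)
Your forward direction is fine, apart from calling the half-open parallelepiped compact; it is bounded, which is all you need. The converse has a genuine gap. You reduce the contradiction to producing nonzero elements $y\in\sigma\cap Q$ with $\ell(y)$ arbitrarily small, and this target is unattainable in general.

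Take $Q=\Z^2+\Z a(1,-1)$ with $a\notin\Q$, and $\sigma$ the first quadrant. Then $Q^\vee=\Z(1,1)$, so $V$ is the line $x_1+x_2=0$, which meets $\sigma$ only at $0$, and $\deg_{\textsc{NR}}(Q)=1$. Every element of $Q$ has $x_1+x_2\in\Z$, so every nonzero element of $\sigma\cap Q$ has $x_1+x_2\geq 1$. By compactness of $\sigma\cap\{x_1+x_2=1\}$, any $\ell$ strictly positive on $\sigma\setminus\{0\}$ is therefore bounded below by some $c'>0$ on $(\sigma\cap Q)\setminus\{0\}$. No choice of $\ell$ rescues the plan.

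Your last paragraph does not close the gap either. The claim that ``$\Lambda$-translates cover'' the open set is unjustified. Adding a small $q\in Q\cap V$ to a fixed $x$ gives $\ell(x+q)\approx\ell(x)$, which is not small. A semigroup does not permit rescaling. The case $\ell|_V=0$, which occurs in the example above with $\ell=x_1+x_2$, is not addressed at all.

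The missing idea is to use your lower bound $c$ for finiteness rather than for smallness. If $\ell(g_j)\geq c>0$ for every generator, then any element $\sum_j n_jg_j$ with $\ell\leq R$ satisfies $c\sum_j n_j\leq R$. Hence only finitely many elements of the semigroup lie in $\{\ell\leq R\}$, so a finitely generated $\sigma\cap Q$ is discrete.

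You have already built the sequence that contradicts this. For large $m$, the points $x+q_m$ are infinitely many distinct elements of $\sigma\cap Q$ with $\ell(x+q_m)\leq\ell(x)+1$. This is exactly the paper's structure: first show that finite generation forces discreteness, then use the accumulating sequence $q_m+v_1+\cdots+v_n$. Your functional $\ell$ actually gives the discreteness step more quickly than the paper's reduction of the generators into the zonotope followed by its coefficient bounds.
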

\noindent\begin{preuve} If $\deg_{\textsc{NR}}(Q)=0$, then $Q$ is a lattice, and the semigroup $\sigma\cap Q$ is finitely generated by Gordan's Lemma.
Conversely, suppose that the semigroup $\sigma\cap Q$ is finitely generated. We first prove that it is discrete.
First, consider a finite set of generators of $\sigma\cap Q$. Since each ray of $\sigma$ has nonzero intersection with $Q$, the above set of generators
must contain at least one generator lying on each ray of $\sigma$. Indeed, in a simplicial cone, a ray cannot be generated unless one of the generators lies on that ray. Denote these $n$ vectors by $\{v_1,\ldots,v_n\}$ and let $\{v_1,\ldots,v_n,v_{n+1},\ldots,v_{n+k}\}$ be the whole set of generators of the semigroup $\sigma\cap Q$. Then, for each $j=n+1,\ldots,n+k$ we have $v_j=\sum_{l=1}^n a_{lj}v_l$, with $a_{lj}$ nonnegative real numbers. Write each $a_{lj}=m_{lj}+b_{lj}$, with $m_{lj}$ a nonnegative integer and $0\leq b_{lj}<1$. Let $u_j=\sum_{l=1}^n b_{lj}v_l$ and discard the vectors $u_j$ that are equal to $0$. After renumbering, we are left with $u_{n+1},\ldots,u_{n+h}$, with $h\leq k$.  Then it is straightforward to verify that the set
$\{v_1,\ldots,v_n,u_{n+1},\ldots,u_{n+h}\}$ still generates the semigroup $\sigma\cap Q$. Moreover, the vectors of this set all lie in the zonotope
$$\Delta=\left\{\sum_{i=1}^n t_iv_i\;|\; 0\leq t_i \leq 1\right\}$$ 
defined by $v_1,\ldots,v_n$. We now prove that $\Delta\cap Q=\Delta\cap(Q\cap \sigma)$ is finite.
Indeed, let $u\in Q\cap \Delta$. Then, since $u\in Q\cap\sigma$, we have $$u=\sum_{i=1}^nm_iv_i+\sum_{j=n+1}^{n+h} n_{j}u_{j}$$ with $m_i,n_j$ nonnegative integers. Hence
$$u=\sum_{i=1}^n\left(m_i+\sum_{j=n+1}^{n+h}n_j b_{ij}\right)v_i.$$ The matrix $(b_{ij})$ has entries $0\leq b_{lj}<1$ and all of its columns are nonzero, since the $u_j$'s are nonzero and the $v_i$'s are linearly independent. On the other hand, since $u\in Q\cap\Delta$ we must have $$0\leq m_i+\sum_{j=n+1}^{n+h}n_j b_{ij}\leq1.$$
Each $m_i$ can only be $0$ or $1$. Moreover, since every column of the matrix $(b_{ij})$ is nonzero, for every $j=n+1,\ldots,n+h$ there exists an index $i$ such that $b_{ij}>0$. Therefore,
$
n_jb_{ij}\leq \sum_{l=n+1}^{n+h} n_l b_{il}\leq 1,
$
which implies that
$
n_j\leq \frac{1}{b_{ij}}.
$
Hence each $n_j$ is bounded above and there are only finitely many possible tuples
$
(m_1,\ldots,m_n,n_{n+1},\ldots,n_{n+h}).
$ Therefore
 $\Delta\cap(Q\cap \sigma)$ is finite.
Moreover, the semigroup is the union of the translates $\Delta+\sum_{i=1}^n m_i v_i$ with $m_i$ nonnegative integers. It is therefore discrete.

Finally, we observe that this implies that $Q$ is in fact a lattice. Indeed, if it were a quasilattice of rank greater than $n$, then, by Corollary~\ref{grado quasireticolo}, $\deg_{\textsc{NR}}(Q)>0$. Hence, the vector subspace $V$ such that $Q\cap V$ is dense in $V$ would have positive dimension. Consequently, there would exist a sequence $(q_m)\subset Q\setminus\{0\} $ converging to $0$. Now, consider the point $v=v_1+\cdots+v_n$ in the interior of $\sigma$. Then, the sequence $(q_m+v)$ is contained in the interior of $\sigma$ for all sufficiently large $m$. Hence $(q_m+v)$  is a sequence in $Q\cap\sigma$ converging to $v$, contradicting the discreteness of $Q\cap\sigma$. 
\end{preuve} 

\begin{rem}\label{ring}
\rm{We have already seen in Remark~\ref{hom} that, if $\deg_{\textsc{NR}}(Q)\neq 0$, the $\Z$--module $\operatorname{Hom}(Q,\Z)$ can no longer be thought of as a quasilattice in $(\R^n)^*$. Even if that were the case, by Theorem~\ref{gordanlemma},
the semigroup algebra associated with the cone
$\sigma$ would not be finitely generated. Consequently, it cannot, in general, serve as the coordinate ring of an affine toric quasifold.}
\end{rem}
\section*{Declarations}
\subsection*{Funding}
Both authors are partially supported by MUR (Italy), PRIN Project 2022AP8HZ9 ``Real and Complex Manifolds: Topology, Geometry and Holomorphic Dynamics", and by GNSAGA, INdAM (Italy).
\subsection*{Conflicts of interest}
The authors declare that they have no conflicts of interest.
\subsection*{Data availability}
No new data were created or analyzed in this study. Data sharing is not applicable to this article.

\bigskip

{\small 

\noindent
Dipartimento di Matematica e Informatica "U. Dini"\\ Universit\`a degli Studi di Firenze \\
Viale Morgagni 67/A, 50134 Firenze, ITALY

\noindent
fiammetta.battaglia@unifi.it, elisa.prato@unifi.it}


\begin{thebibliography}{AA}

\bibitem{cx} F. Battaglia, E. Prato, Generalized toric varieties for simple nonrational convex polytopes, \textit{Int. Math. Res. Not.} \textbf{24} (2001), 1315--1337.

\bibitem{kite} F. Battaglia, E. Prato, The symplectic Penrose kite, \textit{Comm. Math. Phys.} \textbf{299} (2010), 577--601.

\bibitem{laurent} F. Battaglia, E. Prato, Generalized Laurent monomials in nonrational toric geometry, \textit{Contemp. Math.} {\bf 794} (2024), 179--193.

\bibitem{atq} F. Battaglia, E. Prato, Algebraic Toric Quasifolds, arXiv:2604.15192 [math.AG] (2026), 29 p..

\bibitem{hirze} F. Battaglia, E. Prato, D. Zaffran, Hirzebruch surfaces in a one--parameter family,
\textit{Boll. Unione Mat. Ital.} {\bf 12} (2019), 293--305.

\bibitem{bz1} F. Battaglia, D. Zaffran, Foliations modeling nonrational simplicial toric varieties, \textit{Int. Math. Res. Not.} \textbf{2015}, 11785--11815. 

\bibitem{boivin} A. Boivin, Non--simplicial quantum toric varieties, \textit{Adv. Math.} \textbf{441} (2024) 109553, 34 pages.

\bibitem{bosio} F. Bosio, Vari\'et\'es complexes compactes: une g\'en\'eralisation de la construction de Meersseman et de L\'opez de Medrano--Verjovsky,
\textit{Ann. Inst. Fourier} {\bf 51} (2001), 1259--1297.

\bibitem{bourbaki}
N. Bourbaki, \textit{Éléments de mathématique. Livre III: Topologie générale.
Chapitres V--VIII},
Hermann, Paris, 1963.

\bibitem{cox} D. Cox, J. Little, H. Schenck, Toric varieties, Graduate Studies in Mathematics \textbf{124}, American Mathematical Society, 2011.

\bibitem{di} P. Donato, P. Iglesias, Exemples de groupes différentiels: flots irrationnels sur le tore,
\textit{C. R. Acad. Sci. Paris Sér. I Math.} \textbf{301} (1985), 127--130.

\bibitem{H} B. Hoffman, Toric symplectic stacks, \textit{Adv. Math.} \textbf{368} (2020), 43 pages.

\bibitem{HS} B. Hoffman, R. Sjamaar, Stacky Hamiltonian actions and symplectic reduction, \textit{Int. Math. Res. Not.} (2020), 15209--15300.

\bibitem{IZP} P. Iglesias--Zemmour, E. Prato, Quasifolds, Diffeology and Noncommutative Geometry, \textit{J. Noncommut. Geom.} \textbf{15} (2021), 735--759.

\bibitem{IKP} H. Ishida, R. Krutowski, T. Panov, Basic cohomology of canonical holomorphic foliations on complex moment--angle manifolds, {\em Int. Math. Res. Not.} (2022),  5541--5563.

\bibitem{KLMV} L. Katzarkov, E. Lupercio, L. Meersseman, A. Verjovsky, Quantum (non--commutative) toric geometry: Foundations, \textit{Adv. Math.} \textbf{391} (2021), 110 pages.

\bibitem{ldm} S. L\'opez de Medrano and A. Verjovsky, A new family of complex, compact, non symplectic manifolds,
\textit{Bull. Braz. Math. Soc.} {\bf 28} (1997), 253--269.

\bibitem{M} L. Meersseman,
A new geometric construction of compact complex manifolds in any dimension,
\textit{Math. Ann.} {\bf 317} (2000), 79--115.

\bibitem{p} E. Prato, Simple non--rational convex polytopes via symplectic geometry, \textit{Topology} \textbf{40} (2001), 961--975.

\bibitem{ratiu} T. Ratiu, T. N. Zung, Presymplectic convexity and (ir)rational polytopes, \textit{J. Symplectic Geom.} 
\textbf{17} (2019), 1479--1511.

\bibitem{senechal} M. Senechal,
{\it Quasicrystals and geometry},
Cambridge University Press, Cambridge, 1995.

\bibitem{serre}
J.--P. Serre, \textit{Cours d'arithmétique}, 3rd ed.,
Presses Universitaires de France, Paris, 1988.

\bibitem{ziegler} G. Ziegler, Lectures on polytopes, Graduate Texts in Mathematics \textbf{152}, Springer, 1995.

\end{thebibliography}
\end{document}